\documentclass[oneside]{amsart}
\usepackage[T1]{fontenc}
\usepackage[latin9]{inputenc}
\usepackage{amsthm}
\usepackage{amssymb}
\usepackage{esint}

\makeatletter
\numberwithin{equation}{section}
\numberwithin{figure}{section}
\theoremstyle{plain}
\newtheorem{thm}{\protect\theoremname}
  \theoremstyle{definition}
  \newtheorem{defn}[thm]{\protect\definitionname}
  \theoremstyle{plain}
  \newtheorem{cor}[thm]{\protect\corollaryname}
  \theoremstyle{plain}
  \newtheorem{lem}[thm]{\protect\lemmaname}

\usepackage[alphabetic]{amsrefs}

\makeatother

  \providecommand{\corollaryname}{Corollary}
  \providecommand{\definitionname}{Definition}
  \providecommand{\lemmaname}{Lemma}
\providecommand{\theoremname}{Theorem}

\begin{document}

\title{On the Lazarev-Lieb Extension of the Hobby-Rice Theorem}

\author{Vermont Rutherfoord}

\address{Vermont Rutherfoord, Department of Mathematical Sciences, Florida
Atlantic University, 777 Glades Road, Boca Raton, FL 33431, USA}

\email{vermont.rutherfoord@gmail.com}
\begin{abstract}
O. Lazarev and E. H. Lieb proved that given $f_{1},\ldots,f_{n}\in L^{1}\left(\left[0,1\right];\mathbb{C}\right)$,
there exists a smooth function $\Phi$ that takes values on the unit
circle and annihilates $\mbox{span}\left\{ f_{1},\ldots,f_{n}\right\} $.
We give an alternative proof of that fact that also shows the $W^{1,1}$
norm of $\Phi$ can be bounded by $5\pi n+1$. Answering a question
raised by Lazarev and Lieb, we show that if $p>1$ then there is no
bound for the $W^{1,p}$ norm of any such multiplier in terms of the
norms of $f_{1},\ldots,f_{n}$.
\end{abstract}
\maketitle
The Hobby-Rice Theorem \cite{hobbyrice} states 
\begin{thm}
\label{thm:hobbyrice}If $f_{1},\ldots,f_{n}\in L^{1}\left(\left[0,1\right];\mathbb{R}\right)$
then there exists $\Phi:\left[0,1\right]\rightarrow\left\{ -1,1\right\} $
with at most $n$ discontinuities such that for each $k$ 
\[
\int_{0}^{1}f_{k}\left(t\right)\Phi\left(t\right)dt=0.
\]

\end{thm}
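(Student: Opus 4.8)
The plan is to deduce the theorem from the Borsuk--Ulam theorem, following the classical argument. For a point $a=(a_1,\dots,a_{n+1})$ on the unit sphere $S^n\subset\mathbb{R}^{n+1}$, I would use the squares $a_1^2,\dots,a_{n+1}^2$, which sum to $1$, as the lengths of $n+1$ consecutive subintervals of $[0,1]$: set $t_0=0$ and $t_j=\sum_{i=1}^{j}a_i^2$, and define $\Phi_a$ to equal $\operatorname{sgn}(a_j)$ on $[t_{j-1},t_j)$, with $\operatorname{sgn}(0)$ chosen arbitrarily, say as $1$ (the corresponding interval is then empty). This $\Phi_a$ takes values in $\{-1,1\}$ and has at most $n$ discontinuities, located among the points $t_1,\dots,t_n$.

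Next I would define $F\colon S^n\to\mathbb{R}^n$ by $F(a)=\bigl(\int_0^1 f_1\Phi_a,\dots,\int_0^1 f_n\Phi_a\bigr)$, equivalently $F(a)_k=\sum_{j=1}^{n+1}\operatorname{sgn}(a_j)\int_{t_{j-1}}^{t_j}f_k$. Replacing $a$ by $-a$ leaves every $t_j$ unchanged but negates each $\operatorname{sgn}(a_j)$, so $\Phi_{-a}=-\Phi_a$ and hence $F(-a)=-F(a)$; thus $F$ is odd.

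The main point to verify carefully is that $F$ is continuous. The endpoints $t_j$ depend continuously on $a$, and $\int_{t_{j-1}}^{t_j}f_k$ depends continuously on the pair $(t_{j-1},t_j)$ by absolute continuity of the Lebesgue integral. The only potential source of discontinuity is the factor $\operatorname{sgn}(a_j)$, which jumps where $a_j=0$; but at such a point the interval $[t_{j-1},t_j]$ degenerates, so the term $\operatorname{sgn}(a_j)\int_{t_{j-1}}^{t_j}f_k$ is bounded in absolute value by $\int_{t_{j-1}}^{t_j}|f_k|$, which tends to $0$ as $a_j\to0$. Combining these observations shows $F$ is continuous on $S^n$, and I expect this verification to be the only real subtlety in the proof.

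Finally, the Borsuk--Ulam theorem applied to the continuous map $F\colon S^n\to\mathbb{R}^n$ yields a point $a\in S^n$ with $F(a)=F(-a)$, and since $F$ is odd this forces $F(a)=0$, i.e. $\int_0^1 f_k\Phi_a=0$ for every $k$. Taking $\Phi=\Phi_a$ then completes the proof.
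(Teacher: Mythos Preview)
Your proof is correct; this is precisely the Pinkus argument via Borsuk--Ulam. Note, however, that the paper does not give its own proof of this statement: it simply states the Hobby--Rice theorem as a known result, citing \cite{hobbyrice} for the original and \cite{pinkus} for the elegant Borsuk--Ulam proof you have reproduced, and then uses it as an ingredient in the proof of Theorem~\ref{thm:smoothhr}.
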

The theorem has applications in $L^{1}$ approximation and in combinatorics,
particularly necklace splitting problems \cite{nogaalon}. An elegant
proof of the Hobby-Rice Theorem was given by Pinkus \cite{pinkus}
using the Borsuk-Ulam Theorem.

Motivated by a problem in mathematical physics, Lazarev and Lieb \cite{lazarevlieb}
extended this result to obtain a smooth annihilator taking values
on the unit circle, i.e., 
\begin{thm}
\label{thm:smoothhr}If $f_{1},\ldots,f_{n}\in L^{1}\left(\left[0,1\right];\mathbb{C}\right)$
then there exists $\theta\in C^{\infty}\left(\left[0,1\right];\mathbb{R}\right)$
such that 
\begin{equation}
\forall k,\mbox{ }\int_{0}^{1}f_{k}\left(t\right)e^{i\theta\left(t\right)}dt=0.\label{eq:annih}
\end{equation}

\end{thm}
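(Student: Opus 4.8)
The plan is to recast and extend Pinkus's Borsuk--Ulam proof of the Hobby--Rice Theorem (Theorem~\ref{thm:hobbyrice}), replacing the $\{-1,1\}$-valued step functions used there by genuinely smooth $S^{1}$-valued functions obtained by rounding off the jumps. Writing $f_{k}=u_{k}+iv_{k}$ with $u_{k},v_{k}$ real, \eqref{eq:annih} asserts the vanishing of the vector $\bigl(\int_{0}^{1}f_{k}(t)e^{i\theta(t)}\,dt\bigr)_{k=1}^{n}$, which lives in $\mathbb{C}^{n}\cong\mathbb{R}^{2n}$. Since the target is $\mathbb{R}^{2n}$, I would produce $\theta$ inside a family indexed by the sphere $S^{2n}$ and apply the Borsuk--Ulam Theorem.

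The technical core is to construct, for each $y=(y_{0},\dots,y_{2n})\in S^{2n}$, a function $\theta_{y}\in C^{\infty}([0,1];\mathbb{R})$ with: (i) $y\mapsto e^{i\theta_{y}}$ continuous from $S^{2n}$ into $L^{1}([0,1];\mathbb{C})$; (ii) $e^{i\theta_{-y}}\equiv-e^{i\theta_{y}}$ on $[0,1]$; and (iii) $\operatorname{TV}(\theta_{y})\le Cn$ uniformly in $y$. I would use $y$ to partition $[0,1]$ into $2n+1$ consecutive ``plateau'' subintervals of lengths proportional to $y_{0}^{2},\dots,y_{2n}^{2}$, declare $e^{i\theta_{y}}$ equal to the constant $\operatorname{sgn}(y_{j})\in\{-1,1\}$ on the $j$-th plateau, and across a short transition zone between consecutive plateaus let $\theta_{y}$ \emph{increase} smoothly by $\pi$ when $\operatorname{sgn}(y_{j})\ne\operatorname{sgn}(y_{j+1})$ and remain constant otherwise, the value on the first plateau being $0$ or $\pi$ according to $\operatorname{sgn}(y_{0})$. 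Then $\theta_{y}$ is nondecreasing with at most $2n$ risers of height $\pi$, so (iii) holds with constant essentially $2\pi$; and (ii) holds because $y\mapsto-y$ reverses every sign, shifting the first-plateau value by $\pi$ and leaving the pattern of risers unchanged, so that $\theta_{-y}(t)-\theta_{y}(t)-\pi$ is a constant lying in $2\pi\mathbb{Z}$. The subtle point is (i): when some $y_{j}\to0$ the $j$-th plateau together with the transition zones flanking it must be made to shrink to a null set (the extra full turn of $\theta_{y}$ that may accumulate there being invisible to $e^{i\theta_{y}}$), and the limiting parameter values --- where $e^{i\theta_{y}}$ degenerates to a function with an honest jump --- must be handled so as not to spoil the smoothness of the solution that Borsuk--Ulam eventually selects.

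Granting such a family, the endgame is short. The map $G:S^{2n}\to\mathbb{R}^{2n}$, $G(y)=\bigl(\int_{0}^{1}f_{k}(t)e^{i\theta_{y}(t)}\,dt\bigr)_{k=1}^{n}$, is continuous by (i) and dominated convergence (using $|f_{k}e^{i\theta_{y}}|\le|f_{k}|\in L^{1}$), and odd by (ii). Borsuk--Ulam produces $y_{0}\in S^{2n}$ with $G(y_{0})=0$, so $\theta:=\theta_{y_{0}}\in C^{\infty}([0,1];\mathbb{R})$ satisfies \eqref{eq:annih}; this is Theorem~\ref{thm:smoothhr}. For the quantitative statement, $\Phi:=e^{i\theta}$ obeys $\Phi'=i\theta'\Phi$, whence $\|\Phi\|_{W^{1,1}}=\|\Phi\|_{L^{1}}+\|\theta'\|_{L^{1}}=1+\operatorname{TV}(\theta)$, and (iii) bounds the right-hand side by $1+Cn$; a careful accounting of the transitions (and of the behavior near the endpoints of $[0,1]$) is what one pushes to obtain the constant $5\pi$ claimed in the abstract.

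The step I expect to be the main obstacle is the equivariant construction in the second paragraph. In the $\{-1,1\}$-valued setting the antipode is realized by a single global sign flip, whereas here one needs a smooth circle-valued family on which the antipodal involution of $S^{2n}$ acts as multiplication by $-1$; the impossibility of choosing the base values $\beta_{j}(y)$ of the plateaus continuously in $y$ with $\beta_{j}(-y)=\beta_{j}(y)+\pi$ (which would force $\beta_{j}(y)=\beta_{j}(y)+2\pi$) is precisely what makes this delicate and what dictates the ``increase only'' bookkeeping above. Combining this equivariance with $L^{1}$-continuity across the degenerate configurations where plateaus collapse --- while keeping the total rotation linear in $n$ --- is the heart of the matter; once the family is in hand, the continuity, oddness, the appeal to Borsuk--Ulam, and the norm computation are all routine.
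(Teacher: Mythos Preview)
Your approach is genuinely different from the paper's. You attempt a direct smooth analogue of Pinkus's Borsuk--Ulam argument, parameterizing a family $\{\theta_y\}_{y\in S^{2n}}$ of smooth phases and seeking an odd continuous map $G:S^{2n}\to\mathbb{R}^{2n}$. The paper instead uses Brouwer's Fixed Point Theorem on the polydisk $D^{n}$: it selects Lebesgue points $t_{1},\dots,t_{n}$ at which the matrix $M=[f_{k}(t_{j})]$ is invertible, applies the (discrete) Hobby--Rice theorem on the complement $L_{\delta}=[0,1]\setminus\bigcup (t_{j}-\delta,t_{j}+\delta)$ to make the residual integrals small, and then inserts local smooth ``dials'' $\theta_{\delta,z_{j}}$ near each $t_{j}$, parameterized by $z_{j}\in D$, to cancel what remains. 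Continuity in $\vec z$ is straightforward because no equivariance is required, and Brouwer produces the zero. The constant $5\pi n$ comes from this two-stage scheme: $3\pi n$ from the smoothed Hobby--Rice step and $2\pi n$ from the $n$ monotone dials.

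The obstacle you single out is real and is precisely what the paper's route circumvents. With plateau lengths $y_{j}^{2}$ summing to $1$ there is no room for transition zones of fixed width; if instead you let the transition width adjacent to plateau $j$ shrink with $y_{j}^{2}$, then the one-sided limits in $L^{1}$ as $y_{j}\to 0^{\pm}$ agree (the collapsing plateau and its flanking ramps vanish in measure), so $G$ is continuous and odd---but $\theta_{y_{0}}$ fails to be $C^{\infty}$ exactly when the Borsuk--Ulam point $y_{0}$ has a zero coordinate. You then need a further step (e.g.\ delete the vanishing coordinates and rerun the construction with fewer plateaus, which now all have positive length) to recover a genuinely smooth phase. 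This is fixable, and if carried through your monotone construction would even give $\operatorname{TV}(\theta)\le 2\pi n$, better than $5\pi n$; but the paper sidesteps the whole equivariance-versus-smoothness tension by trading Borsuk--Ulam for Brouwer, where no antipodal symmetry---hence no sign ambiguity at degenerate parameters---is ever needed.
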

Lazarev and Lieb suggested that there should be a simpler proof, and
in this spirit, we offer the following proof. They also raised the
question of calculating the $H^{1}=W^{1,2}$ norm of $f_{k}e^{i\theta}$.
Corollary~\ref{cor:w11bound} shows there is such $\theta$ with
$\left\Vert e^{i\theta\left(\cdot\right)}\right\Vert _{W^{1,1}}\le5\pi n+1$.
We also show that for $p>1$ there exists a large class of normed
spaces $\mathcal{N}=\left\{ \left(N,\left\Vert \cdot\right\Vert _{N}\right)\right\} $
including $L^{1}$ so that $\left\Vert e^{i\theta\left(\cdot\right)}\right\Vert _{W^{1,p}}$
cannot be bounded by $\left\Vert f_{1}\right\Vert _{N},\ldots,\left\Vert f_{n}\right\Vert _{N}$.
\begin{proof}
[Proof of Theorem 2.]We may assume $f_{1},\ldots,f_{n}$ are linearly
independent in $L^{1}$ and thus choose $0=t_{0}<t_{1}<\ldots<t_{n}<t_{n+1}=1$
so that 
\[
M:=\left[\begin{array}{ccc}
f_{1}\left(t_{1}\right) & \ldots & f_{1}\left(t_{n}\right)\\
\vdots & \ddots & \vdots\\
f_{n}\left(t_{1}\right) & \ldots & f_{n}\left(t_{n}\right)
\end{array}\right]
\]
 is invertible and each $t_{j}$ is a Lebesgue point of all $f_{k}$
(Lemma \ref{lem:gaussian elimination}).

For each $u,v\in\left[-1,1\right]$, let $\theta_{u+iv}:\mathbb{R}\rightarrow\mathbb{R}$
be a step function supported on and increasing in the interval $\left[-1,1\right]$
that takes the values $0,\frac{\pi}{2},\pi,\frac{3\pi}{2}$ on intervals
of lengths $\frac{1+u}{2},\frac{1+v}{2},\frac{1-u}{2},\frac{1-v}{2}$,
respectively. Thus $\int_{-1}^{1}e^{i\theta_{u+iv}\left(t\right)}dt=u+iv$.

Choose $\psi\in C^{\infty}\left(\mathbb{R};\mathbb{R}^{+}\right)$
supported on $\left[-1,1\right]$ such that $\int\psi\left(t\right)dt=1$.
Let $\psi_{h}\left(t\right)=\psi\left(t/h\right)/h$.

Also let $I_{S}$ be the indicator function of $S$. Define $\theta_{h,z}^{\#}=\theta_{z}I_{\left(h-1,1-h\right)}+2\pi I_{\left[1-h,\infty\right)}$
and 
\[
\theta_{h,z}=\begin{cases}
\theta_{z} & \mbox{if }h=0\\
\psi_{h}*\theta_{h,z}^{\#} & \mbox{if }0<h<1
\end{cases}
\]
 Note that if $h>0$ then $\theta_{h,z}\left(-1\right)=0$ and $\theta_{h,z}\left(1\right)=2\pi$,
while $\theta_{h,z}^{\left(m\right)}\left(\pm1\right)=0$ for all
$m\ge1$.

Define $D=\left\{ z\in\mathbb{C}:\left|z\right|\le1\right\} $, $d=\min_{j\in\left\{ 0\ldots n\right\} }\left(t_{j+1}-t_{j}\right)/2$,
and $Q:\left[0,d\right]\times D^{n}\rightarrow\mathbb{C}^{n}$: 
\[
Q\left(h;\vec{z}\right)=\begin{cases}
\left(\begin{array}{c}
\sum_{j=1}^{n}z_{j}\cdot f_{k}\left(t_{j}\right)\end{array}\right)_{k=1\ldots n} & \mbox{if }h=0\\
\,\\
\left(\begin{array}{c}
\sum_{j=1}^{n}\int_{-1}^{1}f_{k}\left(th+t_{j}\right)e^{i\theta_{h,z_{j}}\left(t\right)}dt\end{array}\right)_{k=1\ldots n} & \mbox{if }0<h\le d
\end{cases}
\]
 with $\vec{z}:=\left(z_{1},\ldots,z_{n}\right)$. Since $Q\left(0;\vec{z}\right)=M\left(\vec{z}\right)$,
Lemma~\ref{thm:approx ident} shows there is $\delta\in\left(0,d\right]$
such that for all $\vec{z}\in D^{n}$ 
\[
\vec{z}-M^{-1}\left(Q\left(\delta;\vec{z}\right)\right)\in\frac{1}{2}D^{n}.
\]

Let $L_{_{\delta}}=\left[0,1\right]\backslash\bigcup_{j=1}^{n}\left(t_{j}-\delta,t_{j}+\delta\right)$.
By applying the Hobby-Rice Theorem%
\footnote{The Riemann-Lebesgue Lemma also suffices, but the Hobby-Rice Theorem
enables us to compute a bound of the $W^{1,1}$ norm in Corollary~\ref{cor:w11bound}.%
} to $f_{1}I_{L_{\delta}},\ldots,f_{n}I_{L_{\delta}}$ and smoothing
a finite set of discontinuities, we obtain $\phi\in C^{\infty}\left(\left[0,1\right];\mathbb{R}\right)$
supported on $L_{\delta}$ so that 
\[
\vec{r}:=\left(\int_{L_{\delta}}f_{k}\left(t\right)e^{i\phi\left(t\right)}dt\right)_{k=1\ldots n}\in\frac{\delta}{2}M\left(D^{n}\right).
\]

Since $\phi$ vanishes together with all its derivatives at all $t_{j}\pm\delta$,
for all $\vec{z}\in D^{n}$ 
\[
\theta_{\vec{z}}^{*}\left(t\right)=\begin{cases}
\theta_{\delta,z_{j}}\left(\left(t-t_{j}\right)/\delta\right)+2\pi\left(j-1\right) & \mbox{if }t\in\left[t_{j}-\delta,t_{j}+\delta\right]\mbox{ and }1\le j\le n\\
\phi\left(t\right) & \mbox{if }t\in\left[0,t_{1}-\delta\right)\\
\phi\left(t\right)+2\pi j & \mbox{if }t\in\left(t_{j}+\delta,t_{j+1}-\delta\right)\mbox{ and }1\le j\le n-1\\
\phi\left(t\right)+2\pi n & \mbox{if }t\in\left(t_{n}+\delta,1\right]
\end{cases}
\]
 is in $C^{\infty}\left(\left[0,1\right];\mathbb{R}\right)$. Lemma~\ref{lem:continuity}
establishes the continuity of 
\[
T\left(\vec{z}\right):=\left(\int_{0}^{1}f_{k}\left(t\right)e^{i\theta_{_{\vec{z}}}^{*}\left(t\right)}dt\right)_{k=1\ldots n}=\delta Q\left(\delta;\vec{z}\right)+\vec{r}.
\]

Since $\vec{z}-M^{-1}\left(Q\left(\delta;\vec{z}\right)\right)$ and
$\frac{1}{\delta}M^{-1}\left(\vec{r}\right)$ are in $\frac{1}{2}D^{n}$
for all $\vec{z}\in D^{n}$, then $\vec{z}-\frac{1}{\delta}M^{-1}\left(T\left(\vec{z}\right)\right)\in D^{n}$.
By Brouwer's Fixed Point Theorem, there exists $\vec{z_{_{0}}}\in D^{n}$
such that $\vec{z_{0}}-\frac{1}{\delta}M^{-1}\left(T\left(\vec{z_{0}}\right)\right)=\vec{z_{0}}$,
that is to say $T\left(\vec{z_{0}}\right)=0$.\end{proof}
\begin{defn}
Let 
\begin{eqnarray*}
\left\Vert g\left(\cdot\right)\right\Vert _{W^{1,p}} & = & \left(\int_{0}^{1}\left|g\left(t\right)\right|^{p}dt+\int_{0}^{1}\left|g'\left(t\right)\right|^{p}dt\right)^{\frac{1}{p}}\\
\mbox{and }\left\Vert g\left(\cdot\right)\right\Vert _{\overset{\circ}{W}^{1,p}} & = & \left(\int_{0}^{1}\left|g'\left(t\right)\right|^{p}dt\right)^{\frac{1}{p}}.
\end{eqnarray*}
\end{defn}
\begin{cor}
\label{cor:w11bound}If $f_{1},\ldots,f_{n}\in L^{1}\left(\left[0,1\right];\mathbb{C}\right)$
then there exists $\theta\in C^{\infty}\left(\left[0,1\right];\mathbb{R}\right)$
such that for each $k$ 
\[
\int_{0}^{1}f_{k}\left(t\right)e^{i\theta\left(t\right)}dt=0
\]
 and $\left\Vert e^{i\theta\left(\cdot\right)}\right\Vert _{W^{1,1}}\le5\pi n+1$.\end{cor}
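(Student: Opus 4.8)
The plan is to follow the construction in the proof of Theorem~2 and merely keep track of the total variation of the phase it produces. The first observation is elementary: if $\theta\in C^{\infty}\left(\left[0,1\right];\mathbb{R}\right)$ and $g=e^{i\theta}$, then $\left|g\left(t\right)\right|\equiv1$ while $g'=i\theta'e^{i\theta}$, so $\left|g'\left(t\right)\right|=\left|\theta'\left(t\right)\right|$ and hence
\[
\left\Vert e^{i\theta\left(\cdot\right)}\right\Vert _{W^{1,1}}=\int_{0}^{1}1\,dt+\int_{0}^{1}\left|\theta'\left(t\right)\right|dt=1+\int_{0}^{1}\left|\theta'\left(t\right)\right|dt.
\]
So it suffices to produce an annihilator $\theta$ with $\int_{0}^{1}\left|\theta'\right|\le5\pi n$; for a $C^{1}$ function this integral is the total variation of $\theta$, which is additive over a partition of $\left[0,1\right]$ into subintervals. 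As in the proof of Theorem~2 we may assume the $f_{k}$ are linearly independent (if they have rank $m<n$, run the argument on a maximal independent subset and use $5\pi m+1\le5\pi n+1$), and we take $\theta=\theta^{*}_{\vec{z}_{0}}$, the phase obtained at the end of that proof.

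Next I would split $\left[0,1\right]$ at the points $t_{j}\pm\delta$ into the $n$ ``bump'' intervals $\left[t_{j}-\delta,t_{j}+\delta\right]$ and the $n+1$ closed intervals $J_{0},\ldots,J_{n}$ whose union is $L_{\delta}$. On $\left[t_{j}-\delta,t_{j}+\delta\right]$ one has $\theta^{*}_{\vec{z}_{0}}\left(t\right)=\theta_{\delta,z_{j}}\!\left(\left(t-t_{j}\right)/\delta\right)+2\pi\left(j-1\right)$; since $\theta^{\#}_{\delta,z_{j}}$ is non-decreasing from $0$ to $2\pi$ and $\psi_{\delta}\ge0$, its mollification $\theta_{\delta,z_{j}}$ is also non-decreasing from $0$ to $2\pi$, so after rescaling this piece contributes exactly $2\pi$ to the variation, hence $2\pi n$ in total over all the bumps --- independently of $\vec{z}_{0}$. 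On $L_{\delta}$, $\theta^{*}_{\vec{z}_{0}}$ equals $\phi$ plus a locally constant multiple of $2\pi$, so it contributes exactly $\int_{0}^{1}\left|\phi'\right|$, and the whole problem reduces to arranging, inside the proof of Theorem~2, that the auxiliary function $\phi$ satisfies $\int_{0}^{1}\left|\phi'\right|\le3\pi n$.

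For that I would be precise about how $\phi$ is manufactured from the Hobby-Rice step function $\Phi:\left[0,1\right]\to\left\{ -1,1\right\} $ for $f_{1}I_{L_{\delta}},\ldots,f_{n}I_{L_{\delta}}$. The function $\Phi$ has at most $n$ discontinuities; say $d_{i}$ of them lie in $J_{i}$, so $\sum_{i}d_{i}\le n$ and $\Phi$ has $d_{i}+1$ constancy blocks on $J_{i}$. On $J_{i}$, take a piecewise-affine approximant (to be mollified into $\phi$) whose interior value on each block is the element of $\left\{ 0,\pi\right\} $ whose exponential matches $\Phi$ there, passing from one block to the next by a ramp of height $\pi$ on a short subinterval --- ``always step down when possible'', so that the approximant stays in $\left\{ 0,\pi\right\} $ between ramps; this costs $d_{i}\pi$ of variation on $J_{i}$. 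The only extra cost is at the endpoints of $J_{i}$ of the form $t_{j}\pm\delta$, where $\phi$ must vanish (so that $\theta^{*}$ is $C^{\infty}$ and $\phi$ is supported on $L_{\delta}$): if the adjacent block of $\Phi$ has value $-1$ one inserts one further ramp of height $\pi$. Since $J_{0}$ and $J_{n}$ each have one free endpoint ($0$, resp.\ $1$) while the other $n-1$ intervals have none, the endpoint ramps cost at most $1\cdot\pi+1\cdot\pi+2\left(n-1\right)\pi=2n\pi$, so $\int_{0}^{1}\left|\phi'\right|\le\left(\sum_{i}d_{i}\right)\pi+2n\pi\le3\pi n$. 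All the ramping and mollification can be confined to subintervals of arbitrarily small total length, so by absolute continuity of $t\mapsto\int\left|f_{k}\right|$ one still has $e^{i\phi}=\Phi$ off a set of small measure in $L_{\delta}$, and hence $\vec{r}$ can be made as close to $\left(\int_{L_{\delta}}f_{k}\Phi\right)_{k}=0$ as desired, in particular lying in $\tfrac{\delta}{2}M\!\left(D^{n}\right)$; the rest of the proof of Theorem~2 (continuity of $T$, Brouwer) then applies verbatim and yields $\theta=\theta^{*}_{\vec{z}_{0}}$ with $\int_{0}^{1}\left|\theta'\right|\le2\pi n+3\pi n=5\pi n$.

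The work is concentrated in the construction of $\phi$: realizing the $\pm1$-valued step function $\Phi$ by a continuous phase while (a) preventing the phase from drifting --- handled by the ``always step down'' rule, which pins the approximant to $\left\{ 0,\pi\right\} $ away from the ramps --- and (b) forcing $\phi$ back to exactly $0$ at each of the $2n$ points $t_{j}\pm\delta$, which is what contributes the additional $2n\pi$ and is the reason the final constant is $5\pi n$ rather than $3\pi n$. Everything else --- the $W^{1,1}$ identity, additivity of total variation, monotonicity of the mollified bumps, and smallness of $\vec{r}$ --- is routine.
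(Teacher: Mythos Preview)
Your overall decomposition is the same as the paper's---bumps contribute exactly $2\pi n$, so the task is to build $\phi$ with $\int_{0}^{1}\left|\phi'\right|\le 3\pi n$---but there is a genuine gap in your Hobby--Rice count. Theorem~\ref{thm:hobbyrice} is stated for \emph{real}-valued integrands; to make $\int_{L_{\delta}}f_{k}\Phi=0$ for the $n$ \emph{complex} functions $f_{k}I_{L_{\delta}}$ you must apply it to their $n$ real parts and $n$ imaginary parts, so $\Phi$ may have up to $2n$ discontinuities, not $n$. With $\sum_{i}d_{i}\le 2n$ your bookkeeping yields $\int_{0}^{1}\left|\phi'\right|\le 2n\pi+2n\pi=4\pi n$, and hence only $\left\Vert e^{i\theta}\right\Vert_{W^{1,1}}\le 6\pi n+1$.

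The paper recovers $5\pi n+1$ via a symmetry observation you are missing: since $\int f_{k}I_{L_{\delta}}\Phi=0$ if and only if $\int f_{k}I_{L_{\delta}}(-\Phi)=0$, one may replace $\Phi$ by $-\Phi$ (equivalently $\phi^{\#}$ by $\pi-\phi^{\#}$) so that $\Phi=+1$ on at least half of the $2n$ boundary points $t_{j}\pm\delta$. Then at most $n$ of those endpoints require an extra $\pi$-ramp, the endpoint cost drops from $2n\pi$ to $n\pi$, and one obtains $\int_{0}^{1}\left|\phi'\right|\le 2n\pi+n\pi=3\pi n$ as required. Apart from this missing trick, your argument (keep the approximant in $\{0,\pi\}$ between ramps; shrink the ramps so $\vec{r}$ lands in $\tfrac{\delta}{2}M(D^{n})$) coincides with the paper's.
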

\begin{proof}
The calculation of the bound follows from a careful selection of $\phi$
in the preceding proof. The Hobby-Rice Theorem applied to the $n$
real parts and $n$ imaginary parts of $f_{k}I_{L_{\delta}}$ implies
that there exists $\phi^{\#}:\mathbb{R}\rightarrow\left\{ 0,\pi\right\} $
with at most $2n$ discontinuities such that for each $k$ 
\[
\int f_{k}\left(t\right)I_{L_{\delta}}e^{i\phi^{\#}\left(t\right)}dt=0.
\]
 Since this equation still holds if $\phi^{\#}$ is replaced with
$\pi-\phi^{\#}$, choose such $\phi^{\#}$ that is non-zero on at
most $n$ points at the boundary of $L_{\delta}$. Note that if $\phi^{\#}I_{L_{\delta}}$
is discontinuous at $t$, then either $\phi^{\#}$ is discontinuous
at $t$ or $t=t_{j}\pm\delta$ where $1\le j\le n$ and $\phi^{\#}\left(t\right)\neq0$.
Thus $\phi^{\#}I_{L_{\delta}}$ has at most $3n$ discontinuities.
Choose $\eta>0$ so that by selecting 
\[
\phi=\left(\phi^{\#}I_{L_{\delta+\eta}}\right)*\psi_{\eta}
\]
 then 
\[
\vec{r_{\eta}}:=\left[\int_{L_{\delta}}f_{k}\left(t\right)e^{i\phi\left(t\right)}dt\right]_{k=1\ldots n}\in\frac{\delta}{2}M\left(D^{n}\right).
\]

Note that $\phi$ vanishes together with all its derivatives at all
$t_{j}\pm\delta$. Also note that $\phi^{\#}I_{L_{\delta+\eta}}$
has no more discontinuities than $\phi^{\#}I_{L_{\delta}}$, which
is at most $3n$. Thus there exist $m\le3n$ and $0<y_{1}<\ldots<y_{m}<1$
such that $\phi^{\#}I_{L_{\delta+\eta}}\left(t\right)$ or $\pi-\phi^{\#}I_{L_{\delta+\eta}}\left(t\right)$
for all $t\in\left[0,1\right]\backslash\left\{ y_{1},\ldots,y_{m}\right\} $
is equal to $\pi\sum_{j=1}^{m}\left(-1\right)^{j+1}I_{\left[y_{j},\infty\right)}\left(t\right)$.
Consequently, 
\begin{eqnarray*}
\int_{L_{\delta}}\left|\left(\theta_{\vec{z}}^{*}\right)'\left(t\right)\right|dt & = & \int_{L_{\delta}}\left|\phi'\left(t\right)\right|dt\\
 & = & \int_{0}^{1}\left|\phi'\left(t\right)\right|dt\\
 & = & \int_{0}^{1}\left|\left(\phi^{\#}I_{L_{\delta+\eta}}*\psi_{\eta}\right)'\left(t\right)\right|dt\\
 & \le & \pi\sum_{j=1}^{m}\int_{0}^{1}\left(I_{\left[y_{j},\infty\right)}*\psi_{\eta}\right)'\left(t\right)dt\\
 & \le & 3\pi n
\end{eqnarray*}

Recall that $\theta_{\delta,z}$ is an increasing function with $\theta_{\delta,z}\left(-1\right)=0$
and $\theta_{\delta,z}\left(1\right)=2\pi$ for all $z\in D$. Thus
$\int_{t_{j}-\delta}^{t_{j}+\delta}\left|\left(\theta_{\vec{z}}^{*}\right)'\left(t\right)\right|=2\pi$
for $1\le j\le n$ and so $\int_{0}^{1}\left|\left(\theta_{\vec{z}}^{*}\right)'\left(t\right)\right|dt\le5\pi n$.
Consequently, $\left\Vert e^{i\theta_{\vec{z}}^{*}\left(\cdot\right)}\right\Vert _{\overset{\circ}{W}^{1,1}}\le5\pi n$
and $\left\Vert e^{i\theta_{\vec{z}}^{*}\left(\cdot\right)}\right\Vert _{W^{1,1}}\le5\pi n+1$.
Since $\max_{t\in\left[0,1\right]}\left|\theta_{\vec{z}}^{*}\left(t\right)\right|\le\left(2n+1\right)\pi$,
it follows that $\left\Vert \theta_{\vec{z}}^{*}\left(\cdot\right)\right\Vert _{W^{1,1}}\le\left(7n+1\right)\pi$.
\end{proof}
Clearly, if $f_{1},\ldots,f_{n}$ are real valued, they may be combined
into $\left\lceil \frac{n}{2}\right\rceil $ complex valued functions
and the bounds reduce accordingly.

For $p>1$ the situation is different. 
\begin{defn}
Let 
\[
A\left(f\right)=\left\{ \theta\in C^{\infty}\left(\left[0,1\right];\mathbb{R}\right):\int_{0}^{1}f\left(t\right)e^{i\theta\left(t\right)}dt=0\right\} .
\]

\end{defn}
\begin{defn}
Let 
\[
\rho\left(f\right)=\inf\left\{ \int_{0}^{1}\left|\theta'\left(t\right)\right|^{p}dt:\theta\in A\left(f\right)\right\} .
\]

\end{defn}
\begin{defn}
Let 
\[
\left(\Upsilon_{n}f\right)\left(t\right)=\begin{cases}
f\left(2^{n}t\right) & \mbox{if }0\le t\le2^{-n}\\
0 & \mbox{otherwise}
\end{cases}.
\]
\end{defn}
\begin{thm}
Assume $N$ is a norm for which there exists $f\in L^{1}\left(\left[0,1\right];\mathbb{C}\right)$
such that $0<\left\Vert \Upsilon_{n}f\left(\cdot\right)\right\Vert _{N}<\infty$
for all $n\ge1$ and $\rho\left(f\right)>0$. 

Then, given any $l,K\in\mathbb{R}^{+}$, there exists $g\in L^{1}\left(\left[0,1\right];\mathbb{C}\right)$
such that $\left\Vert g\left(\cdot\right)\right\Vert _{N}=l$ and
$\rho\left(g\right)>K$.\end{thm}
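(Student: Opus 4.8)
The plan is to obtain $g$ by concentrating $f$ into a very short interval and then rescaling by a positive constant. Two elementary observations drive the argument. First, both the norm $\left\Vert\cdot\right\Vert_{N}$ and the functional $\rho$ behave simply under scalar multiplication: $\left\Vert c\,\phi\right\Vert_{N}=\left|c\right|\left\Vert\phi\right\Vert_{N}$, while the annihilation condition $\int_{0}^{1}f\,e^{i\theta}=0$ is homogeneous in $f$, so $A\left(c\,\phi\right)=A\left(\phi\right)$ and hence $\rho\left(c\,\phi\right)=\rho\left(\phi\right)$ for every $c\neq0$. Thus it suffices to find one $\phi\in L^{1}$ with $0<\left\Vert\phi\right\Vert_{N}<\infty$ and $\rho\left(\phi\right)>K$; then $g=\left(l/\left\Vert\phi\right\Vert_{N}\right)\phi$ does the job. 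Second, concentrating $f$ forces any annihilating phase to turn just as much over a much shorter interval, and for $p>1$ this inflates $\int_{0}^{1}\left|\theta'\right|^{p}$. The candidate for $\phi$ is $\Upsilon_{n}f$ for suitably large $n$, which by hypothesis satisfies $0<\left\Vert\Upsilon_{n}f\left(\cdot\right)\right\Vert_{N}<\infty$.

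The heart of the matter is the scaling estimate $\rho\left(\Upsilon_{n}f\right)\ge 2^{n\left(p-1\right)}\rho\left(f\right)$. To prove it, take any $\theta\in A\left(\Upsilon_{n}f\right)$ and set $\psi\left(s\right)=\theta\left(2^{-n}s\right)$ for $s\in\left[0,1\right]$; since $\theta\in C^{\infty}\left(\left[0,1\right];\mathbb{R}\right)$ and $2^{-n}s\in\left[0,1\right]$, we have $\psi\in C^{\infty}\left(\left[0,1\right];\mathbb{R}\right)$. The change of variables $t=2^{-n}s$ in $\int_{0}^{1}\left(\Upsilon_{n}f\right)\left(t\right)e^{i\theta\left(t\right)}dt=\int_{0}^{2^{-n}}f\left(2^{n}t\right)e^{i\theta\left(t\right)}dt$ gives $2^{-n}\int_{0}^{1}f\left(s\right)e^{i\psi\left(s\right)}ds$, so $\psi\in A\left(f\right)$. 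The same substitution yields $\int_{0}^{1}\left|\psi'\left(s\right)\right|^{p}ds=2^{-n\left(p-1\right)}\int_{0}^{2^{-n}}\left|\theta'\left(t\right)\right|^{p}dt\le 2^{-n\left(p-1\right)}\int_{0}^{1}\left|\theta'\left(t\right)\right|^{p}dt$, whence $\int_{0}^{1}\left|\theta'\right|^{p}\ge 2^{n\left(p-1\right)}\int_{0}^{1}\left|\psi'\right|^{p}\ge 2^{n\left(p-1\right)}\rho\left(f\right)$. Taking the infimum over $\theta\in A\left(\Upsilon_{n}f\right)$ (which is nonempty by Theorem~\ref{thm:smoothhr}) proves the estimate.

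To finish, use $\rho\left(f\right)>0$ and $p>1$ to choose $n$ with $2^{n\left(p-1\right)}\rho\left(f\right)>K$; then $\rho\left(\Upsilon_{n}f\right)>K$ by the estimate above. Put $g=\left(l/\left\Vert\Upsilon_{n}f\left(\cdot\right)\right\Vert_{N}\right)\Upsilon_{n}f$, which lies in $L^{1}$ and satisfies $\left\Vert g\left(\cdot\right)\right\Vert_{N}=l$; by scale-invariance $\rho\left(g\right)=\rho\left(\Upsilon_{n}f\right)>K$, as required. I do not expect a serious obstacle: the only points needing care are the scale-invariance of $A$ (hence of $\rho$) and the fact that the rescaling $s\mapsto 2^{-n}s$ maps $\left[0,1\right]$ into $\left[0,1\right]$ and so preserves smoothness up to the endpoints, both immediate. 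The single genuine input is the hypothesis $\rho\left(f\right)>0$, without which the concentration trick yields nothing; this is precisely where the assumption on $N$, through the exhibited $f$, enters.
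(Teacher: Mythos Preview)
Your proof is correct and follows essentially the same route as the paper's: establish the scaling inequality $\rho(\Upsilon_n f)\ge 2^{n(p-1)}\rho(f)$ by restricting an annihilator $\theta$ of $\Upsilon_n f$ to $[0,2^{-n}]$ and rescaling to obtain an annihilator of $f$, then use scale-invariance of $\rho$ under nonzero scalar multiples to normalize. The only cosmetic difference is that the paper begins with a near-minimizer $\theta$ satisfying $\int_0^1|\theta'|^p<\rho(\Upsilon_n f)+\epsilon$ and lets $\epsilon\to 0$, whereas you take an arbitrary $\theta\in A(\Upsilon_n f)$ and pass to the infimum at the end; these are equivalent.
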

\begin{proof}
Choose $\epsilon>0$ and $\theta\in A\left(\Upsilon_{n}f\right)$
such that $\int_{0}^{1}\left|\theta'\left(t\right)\right|^{p}dt<\rho\left(\Upsilon_{n}f\right)+\epsilon$.
Then $\left(\Upsilon_{-n}\theta\right)|_{\left[0,1\right]}\in A\left(f\right)$,
and so 
\begin{eqnarray*}
\rho\left(\Upsilon_{n}f\right)+\epsilon & > & \int_{0}^{1}\left|\theta'\left(t\right)\right|^{p}dt\\
 & \ge & \int_{0}^{2^{-n}}\left|\theta'\left(t\right)\right|^{p}dt\\
 & = & 2^{-n}\int_{0}^{1}\left|\theta'\left(2^{-n}t\right)\right|^{p}dt\\
 & = & 2^{n\left(p-1\right)}\int_{0}^{1}\left|\left(\theta\left(2^{-n}t\right)\right)'\right|^{p}dt\\
 & \ge & 2^{n\left(p-1\right)}\rho\left(f\right)
\end{eqnarray*}
 proving $\rho\left(\Upsilon_{n}f\right)\ge2^{n\left(p-1\right)}\rho\left(f\right)$.

Also, since $A\left(g\right)=A\left(cg\right)$ for all $c\neq0$
then 
\[
2^{n\left(p-1\right)}\rho\left(f\right)\le\rho\left(\Upsilon_{n}f\right)=\rho\left(l\Upsilon_{n}f/\left\Vert \left(\Upsilon_{n}f\right)\left(\cdot\right)\right\Vert _{N}\right).
\]
 Consequently, if $n$ is large enough so that $2^{n\left(p-1\right)}\rho\left(f\right)>K$
then $g:=l\Upsilon_{n}f/\left\Vert \left(\Upsilon_{n}f\right)\left(\cdot\right)\right\Vert _{N}$
has the property that $\rho\left(g\right)>K$ and $\left\Vert g\left(\cdot\right)\right\Vert _{N}=l$.
\end{proof}
The $W^{1,p}$ norms of $f_{k}\left(t\right)e^{i\theta\left(t\right)}$
fare no better, since if $f_{1}\left(t\right)=1$, then $\left\Vert f_{1}\left(\cdot\right)e^{i\theta\left(\cdot\right)}\right\Vert _{W^{1,p}}=\left\Vert e^{i\theta\left(\cdot\right)}\right\Vert _{W^{1,p}}\ge\left(\max_{k}\rho\left(f_{k}\right)\right)^{\frac{1}{p}}$.

\section*{Lemmas}

We include the lemmas that were used above, some or all of which may
be familiar to the reader.
\begin{lem}
\label{lem:gaussian elimination}If $f_{1},\ldots,f_{n}\in L^{1}\left(\left[0,1\right];\mathbb{C}\right)$
are linearly independent in $L^{1}$, then there exist $t_{1},\ldots t_{n}\in\left(0,1\right)$
so that 
\[
M:=\left[\begin{array}{ccc}
f_{1}\left(t_{1}\right) & \ldots & f_{1}\left(t_{n}\right)\\
\vdots & \ddots & \vdots\\
f_{n}\left(t_{1}\right) & \ldots & f_{n}\left(t_{n}\right)
\end{array}\right]
\]
 is invertible and $t_{j}$ is a Lebesgue point of $f_{k}$ for each
$j,k\in1\ldots n$.\end{lem}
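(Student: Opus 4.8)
The plan is to separate the two requirements --- that the $t_{j}$ be common Lebesgue points and that $M$ be invertible --- and dispatch them in that order. First I would invoke the Lebesgue differentiation theorem: for each $k$ the set $E_{k}$ of Lebesgue points of $f_{k}$ has full measure in $\left[0,1\right]$, so $E:=\left(0,1\right)\cap\bigcap_{k=1}^{n}E_{k}$ also has full measure, hence is dense and uncountable. For $t\in E$ the value $f_{k}\left(t\right)$ is the unambiguous limit of averages of $f_{k}$ over small intervals about $t$, so the vector $F\left(t\right):=\left(f_{1}\left(t\right),\ldots,f_{n}\left(t\right)\right)\in\mathbb{C}^{n}$ is well defined; the whole problem is now to choose $t_{1},\ldots,t_{n}\in E$ with $F\left(t_{1}\right),\ldots,F\left(t_{n}\right)$ linearly independent over $\mathbb{C}$, since these vectors are precisely the columns of $M$.

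The key claim is that $\left\{ F\left(t\right):t\in E\right\} $ spans $\mathbb{C}^{n}$. Granting this, any spanning subset of $\mathbb{C}^{n}$ contains a basis, so there are $t_{1},\ldots,t_{n}\in E$ for which $F\left(t_{1}\right),\ldots,F\left(t_{n}\right)$ form a basis; then $M$ is invertible, the $t_{j}$ are automatically distinct (two equal columns would make $M$ singular), and we may relabel them to be increasing. To prove the claim, suppose instead that $V:=\operatorname{span}_{\mathbb{C}}\left\{ F\left(t\right):t\in E\right\} $ is a proper subspace of $\mathbb{C}^{n}$. Choose a nonzero $a=\left(a_{1},\ldots,a_{n}\right)\in\mathbb{C}^{n}$ annihilating $V$ under the standard bilinear pairing, so that $\sum_{k=1}^{n}a_{k}f_{k}\left(t\right)=0$ for every $t\in E$. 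Since $E$ has full measure, $\sum_{k=1}^{n}a_{k}f_{k}=0$ as an element of $L^{1}$, contradicting the linear independence of $f_{1},\ldots,f_{n}$.

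There is no deep obstacle here; the only point that needs care is the interplay between genuine pointwise values and $L^{1}$ equivalence classes --- ``linearly independent in $L^{1}$'' refers to almost-everywhere equality, whereas $M$ is built from honest numbers $f_{k}\left(t_{j}\right)$ --- and this is exactly what restricting to the full-measure set $E$ of common Lebesgue points reconciles. If one prefers an argument closer in spirit to the lemma's name, the same conclusion follows by induction on $n$ via cofactor expansion: if the minor $\det\left[f_{k}\left(t_{j}\right)\right]_{k,j=1}^{n-1}$ vanishes for all choices in $E^{n-1}$, apply the inductive hypothesis to $f_{1},\ldots,f_{n-1}$; otherwise fix $s_{1},\ldots,s_{n-1}\in E$ making that minor nonzero and expand $t\mapsto\det\left[F\left(s_{1}\right),\ldots,F\left(s_{n-1}\right),F\left(t\right)\right]$ along its last column, obtaining a fixed nontrivial linear combination $\sum_{k}c_{k}f_{k}$ (with $c_{n}$ equal to the nonzero minor) that would then vanish throughout $E$, hence almost everywhere, again a contradiction.
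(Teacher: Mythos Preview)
Your argument is correct. The paper's proof is precisely the inductive ``Gaussian elimination'' route you sketch at the end: it fixes $t_{1},\ldots,t_{n-1}$ making the upper-left $(n-1)\times(n-1)$ minor $M'$ invertible (inductive hypothesis applied to $f_{1},\ldots,f_{n-1}$), solves $[\beta_{1},\ldots,\beta_{n-1}]M'=[f_{n}(t_{1}),\ldots,f_{n}(t_{n-1})]$, and then chooses $t_{n}\in P$ with $f_{n}(t_{n})-\sum\beta_{k}f_{k}(t_{n})\neq 0$, which exists because the combination is nonzero in $L^{1}$; a row operation shows $\det M$ equals this value times $\det M'$. Your primary argument is a bit more direct: rather than building $M$ column by column, you observe at once that if $\{F(t):t\in E\}$ failed to span $\mathbb{C}^{n}$ then a nonzero annihilating covector would produce a nontrivial relation $\sum a_{k}f_{k}=0$ holding on the full-measure set $E$, hence in $L^{1}$. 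Both arguments hinge on the same point---a relation on the set of common Lebesgue points is a relation in $L^{1}$---and yours packages it in one step of linear algebra while the paper's is more constructive, in keeping with the lemma's name.
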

\begin{proof}
Let $P$ be the set of points in $\left(0,1\right)$ that are Lebesgue
points for all $f_{k}$.

The case $n=1$ is clear. If $n>1$, let us assume inductively that
there are $t_{1},\ldots,t_{n-1}\in P$ such that $M':=\left[f_{k}\left(t_{j}\right)\right]_{\left(n-1\right)\times\left(n-1\right)}$
is invertible. Thus there exist $\beta_{1},\ldots,\beta_{n-1}\in\mathbb{C}$
such that 
\[
\left[\beta_{1}\ldots\beta_{n-1}\right]M'=\left[f_{n}\left(t_{1}\right)\ldots f_{n}\left(t_{n-1}\right)\right].
\]
 Furthermore, since $f_{1},\ldots,f_{n}$ are linearly independent
in $L^{1}$, there exists $t_{n}\in P$ such that 
\[
y_{n}:=f_{n}\left(t_{n}\right)-\beta_{1}f_{1}\left(t_{n}\right)-\ldots-\beta_{n-1}f_{n-1}\left(t_{n}\right)\neq0.
\]
 Thus $M:=\left[f_{k}\left(t_{j}\right)\right]_{n\times n}$ has a
nonzero determinant, namely $y_{n}\det M'$.\end{proof}
\begin{lem}
\label{thm:approx ident}If $t_{0}\in\left(0,1\right)$ is a Lebesgue
point of $f\in L^{1}\left(\left[0,1\right];\mathbb{C}\right)$, then,
uniformly in $z\in D$, 
\[
\lim_{h\rightarrow0^{+}}\int_{-1}^{1}f\left(th+t_{0}\right)\cdot\theta_{h,z}\left(t\right)dt=f\left(x\right)\cdot z
\]
\end{lem}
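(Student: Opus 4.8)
The quantity that actually enters the proof of Theorem~2 is $\int_{-1}^{1}f(th+t_{0})\,e^{i\theta_{h,z}(t)}\,dt$, so the statement to establish is that this converges to $f(t_{0})\,z$ as $h\to0^{+}$, uniformly for $z\in D$. The plan is the usual approximate-identity split. Writing $z=\int_{-1}^{1}e^{i\theta_{z}(t)}\,dt$ (the defining property of $\theta_{z}$), one has
\[
\int_{-1}^{1}f(th+t_{0})\,e^{i\theta_{h,z}(t)}\,dt-f(t_{0})\,z=\int_{-1}^{1}\bigl(f(th+t_{0})-f(t_{0})\bigr)e^{i\theta_{h,z}(t)}\,dt+f(t_{0})\Bigl(\int_{-1}^{1}e^{i\theta_{h,z}(t)}\,dt-z\Bigr),
\]
and it suffices to send each summand to $0$ uniformly in $z$.

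For the first summand, $|e^{i\theta_{h,z}(t)}|=1$ gives the $z$-free bound $\tfrac1h\int_{t_{0}-h}^{t_{0}+h}|f(s)-f(t_{0})|\,ds$ after the substitution $s=th+t_{0}$, and this tends to $0$ as $h\to0^{+}$ precisely because $t_{0}$ is a Lebesgue point of $f$. For the second summand one must show $\int_{-1}^{1}\bigl(e^{i\theta_{h,z}(t)}-e^{i\theta_{z}(t)}\bigr)\,dt=O(h)$ with an absolute constant. I would argue that $\theta_{h,z}^{\#}$ agrees with $\theta_{z}$ on $(h-1,1-h)$, hence differs from it within $[-1,1]$ only on $[-1,h-1]\cup[1-h,1]$, of measure $2h$; that $\psi_{h}*g$ coincides with $g$ away from the $h$-neighbourhood of the jump set of $g$; and that $\theta_{h,z}^{\#}$ has at most five jumps (the $\le 3$ interior jumps of the staircase $\theta_{z}$, plus possible jumps at $h-1$ and $1-h$). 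Therefore $e^{i\theta_{h,z}(\cdot)}$ and $e^{i\theta_{z}(\cdot)}$ coincide off a set of measure $\le 12h$, and on that set both integrands have modulus $1$, so the difference of integrals is $\le 24h$, uniformly in $z$. Letting $h\to0^{+}$ completes the argument.

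The Lebesgue-point estimate for the first summand is routine. The one point that needs care is the uniformity in the second summand: one should check that the jump count and the measure bounds above hold with constants independent of $z$, in particular in the degenerate configurations where some plateaus of $\theta_{z}$ have length $0$ and the staircase has fewer than three interior jumps. Since nothing in the bound ``$\le 12h$'' depends on where the jumps sit, this causes no trouble, but it deserves an explicit remark.
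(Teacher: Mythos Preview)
Your proof is correct and follows essentially the same approach as the paper: the identical two-term split, the Lebesgue-point bound for the first summand, and for the second summand a jump-counting argument showing that $\theta_{h,z}$ and $\theta_{z}$ agree off a set of measure $O(h)$ uniformly in $z$. The only cosmetic difference is that the paper bounds $\int|e^{i\theta_{h,z}}-e^{i\theta_{z}}|$ via $\int|\theta_{h,z}-\theta_{z}|\le 2\pi\cdot 10h$ (working on $(2h-1,1-2h)$ with three interior jumps, plus the two boundary strips of width $2h$), whereas you bound the same integral by $2\cdot 12h$ using the measure of disagreement directly; the underlying structural observation is the same.
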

\begin{proof}
Given $\epsilon>0$, let $\delta_{1}<\epsilon/20\pi\left(\left|f\left(t_{0}\right)\right|+1\right)$.
If $0<h<\delta_{1}$, then, since $\theta_{z}\left(t\right)$ is a
step function, for all values of $t\in\left(2h-1,1-2h\right)$ that
are not within distance $h$ of a discontinuity of $\theta_{z}$,
$\theta_{h,z}\left(t\right)=\theta_{z}\left(t\right)$. Since there
are at most three discontinuities of $\theta_{z}$ in $\left(2h-1,1-2h\right)$
and $\theta_{h,z}\left(t\right),\theta_{z}\left(t\right)\in\left[0,2\pi\right]$,
\[
2\pi\cdot6h\ge\int_{2h-1}^{1-2h}\left|\theta_{h,z}\left(t\right)-\theta_{z}\left(t\right)\right|dt
\]
 and so 
\begin{eqnarray*}
\int_{-1}^{1}\left|e^{i\theta_{h,z}\left(t\right)}-e^{i\theta_{z}\left(t\right)}\right|dt & \le & \int_{-1}^{1}\left|\theta_{h,z}\left(t\right)-\theta_{z}\left(t\right)\right|dt\\
 & \le & 2\pi\cdot\left(6+4\right)h\\
 & < & \epsilon/\left(\left|f\left(t_{0}\right)\right|+1\right).
\end{eqnarray*}

Choose $\delta_{2}>0$ so that if $0<h<\delta_{2}$ then 
\[
\int_{-1}^{1}\left|f\left(th+t_{0}\right)-f\left(t_{0}\right)\right|dt<\epsilon/2
\]
 and let $\delta=\min\left\{ \delta_{1},\delta_{2}\right\} $. Then
\begin{eqnarray*}
 &  & \left|\int_{-1}^{1}f\left(th+t_{0}\right)\cdot e^{i\theta_{h,z}\left(t\right)}dt-\overbrace{\int_{-1}^{1}f\left(t_{0}\right)\cdot e^{i\theta_{z}\left(t\right)}dt}^{=f\left(t_{0}\right)\cdot z}\right|\\
 & \le & \left|\int_{-1}^{1}\left(f\left(th+t_{0}\right)-f\left(t_{0}\right)\right)e^{i\theta_{h,z}\left(t\right)}dt\right|+\left|\int_{-1}^{1}f\left(t_{0}\right)\left(e^{i\theta_{h,z}\left(t\right)}-e^{i\theta_{z}\left(t\right)}\right)dt\right|\\
 & \le & \int_{-1}^{1}\left|f\left(th+t_{0}\right)-f\left(t_{0}\right)\right|dt+\left|f\left(t_{0}\right)\right|\int_{-1}^{1}\left|e^{i\theta_{h,z}\left(t\right)}-e^{i\theta_{z}\left(t\right)}\right|dt\\
 & < & \epsilon
\end{eqnarray*}
\end{proof}
\begin{lem}
\label{lem:continuity}If $f\in L^{1}\left(\left[0,1\right];\mathbb{C}\right)$,
$t_{0}\in\left(0,1\right)$, and $0<h<\min\left\{ t_{0},1-t_{0}\right\} $,
then \textup{
\[
q\left(z\right):=\int_{-1}^{1}f\left(th+t_{0}\right)e^{i\theta_{h,z}\left(t\right)}dt
\]
 is a continuous function of $z\in D$.}\end{lem}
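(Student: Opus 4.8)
The plan is to show that $z\mapsto e^{i\theta_{h,z}}$ is continuous from $D$ into $L^{\infty}\left(\left[-1,1\right]\right)$, from which the lemma follows at once: since the substitution $s=th+t_{0}$ gives $\int_{-1}^{1}\left|f\left(th+t_{0}\right)\right|dt=h^{-1}\int_{t_{0}-h}^{t_{0}+h}\left|f\right|<\infty$ (the interval $\left[t_{0}-h,t_{0}+h\right]$ lies in $\left[0,1\right]$ by the hypothesis on $h$), we have
\[
\left|q\left(z\right)-q\left(z_{0}\right)\right|\le\Bigl(\int_{-1}^{1}\left|f\left(th+t_{0}\right)\right|dt\Bigr)\cdot\sup_{t\in\left[-1,1\right]}\left|e^{i\theta_{h,z}\left(t\right)}-e^{i\theta_{h,z_{0}}\left(t\right)}\right|.
\]

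The first step is to reduce this uniform estimate to an $L^{1}$ statement about the underlying step functions. Using $\left|e^{i\alpha}-e^{i\beta}\right|\le\left|\alpha-\beta\right|$ together with $\theta_{h,z}-\theta_{h,z_{0}}=\psi_{h}*\left(\left(\theta_{z}-\theta_{z_{0}}\right)I_{\left(h-1,1-h\right)}\right)$ and the bound $\left\Vert \psi_{h}*g\right\Vert _{L^{\infty}}\le\left\Vert \psi_{h}\right\Vert _{L^{\infty}}\left\Vert g\right\Vert _{L^{1}}$ (Young's inequality; here $h>0$ is fixed, so $\psi_{h}$ is a fixed bounded function), one gets
\[
\sup_{t\in\left[-1,1\right]}\left|e^{i\theta_{h,z}\left(t\right)}-e^{i\theta_{h,z_{0}}\left(t\right)}\right|\le\left\Vert \psi_{h}\right\Vert _{L^{\infty}}\,\left\Vert \theta_{z}-\theta_{z_{0}}\right\Vert _{L^{1}\left(\left[-1,1\right]\right)}.
\]
So it suffices to prove that $z\mapsto\theta_{z}$ is continuous from $D$ into $L^{1}\left(\left[-1,1\right]\right)$.

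The second step handles this by tracking the break-points. Writing $z=u+iv$ and accumulating the prescribed interval lengths $\tfrac{1+u}{2},\tfrac{1+v}{2},\tfrac{1-u}{2},\tfrac{1-v}{2}$, the function $\theta_{z}$ is the step function on $\left[-1,1\right]$ with successive values $0,\tfrac{\pi}{2},\pi,\tfrac{3\pi}{2}$ and jumps at $\tfrac{u-1}{2},\tfrac{u+v}{2},\tfrac{v+1}{2}$, each of which is a Lipschitz function of $z$. Hence $\theta_{z}$ and $\theta_{z_{0}}$ coincide outside the union of the (at most three) intervals lying between corresponding break-points, a set of total length $O\left(\left|z-z_{0}\right|\right)$, and there $\left|\theta_{z}-\theta_{z_{0}}\right|$ never exceeds $\tfrac{3\pi}{2}$; therefore $\left\Vert \theta_{z}-\theta_{z_{0}}\right\Vert _{L^{1}\left(\left[-1,1\right]\right)}=O\left(\left|z-z_{0}\right|\right)\to0$. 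Chaining the three displays proves continuity of $q$.

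I expect the only mildly delicate point to be the bookkeeping in the second step: verifying that for $z$ near $z_{0}$ the set $\left\{ t:\theta_{z}\left(t\right)\neq\theta_{z_{0}}\left(t\right)\right\}$ really is contained in the union of those three "gap" intervals, including the degenerate cases where some prescribed length vanishes (e.g.\ $u_{0}=\pm1$) and a break-point is doubled. This is immediate once one observes that each break-point depends continuously on $z$. Everything else is a routine application of the convolution bound and the estimate displayed first; note that the hypothesis $h>0$ is used only to guarantee $\left\Vert \psi_{h}\right\Vert _{L^{\infty}}<\infty$.
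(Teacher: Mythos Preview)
Your argument is correct and follows essentially the same route as the paper's: both show that $z\mapsto\theta_{h,z}^{\#}$ (equivalently, $z\mapsto\theta_{z}I_{(h-1,1-h)}$) is $L^{1}$-continuous because the break-points vary continuously in $z$, then use Young's inequality for the mollifier $\psi_{h}$ together with $|e^{i\alpha}-e^{i\beta}|\le|\alpha-\beta|$ to upgrade this to $L^{\infty}$-continuity of $e^{i\theta_{h,z}}$, and finish by pairing against the fixed $L^{1}$ function $t\mapsto f(th+t_{0})$. Your explicit identification of the three break-points $\tfrac{u-1}{2},\tfrac{u+v}{2},\tfrac{v+1}{2}$ is a bit more detailed than the paper's one-line appeal to ``intervals of constancy vary continuously,'' but the logic is identical.
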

\begin{proof}
Since $\theta_{h,z}^{\#}$ is a step function whose intervals of constancy
vary continuously with $z$, for $\epsilon>0$, there exists $\delta>0$
such that if $\left|z_{1}-z_{2}\right|<\delta$ then 
\[
\int_{-1}^{1}\left|\theta_{h,z_{1}}^{\#}\left(t\right)-\theta_{h,z_{2}}^{\#}\left(t\right)\right|dt<\epsilon/\left(\left\Vert f\right\Vert _{L^{1}}+1\right)h\left\Vert \psi\left(t\right)\right\Vert _{L^{\infty}}.
\]
 Then 
\begin{eqnarray*}
\left\Vert e^{\theta_{h,z_{1}}\left(t\right)}-e^{\theta_{h,z_{2}}\left(t\right)}\right\Vert _{L^{\infty}} & \le & \left\Vert \theta_{h,z_{1}}\left(t\right)-\theta_{h,z_{2}}\left(t\right)\right\Vert _{L^{\infty}}\\
 & < & \epsilon/\left(\left\Vert f\right\Vert _{L^{1}}+1\right)
\end{eqnarray*}
 and so 
\begin{eqnarray*}
\left|q\left(z_{1}\right)-q\left(z_{2}\right)\right| & \le & \int_{-1}^{1}\left|f\left(th+t_{0}\right)\left(e^{i\theta_{h,z_{1}}\left(t\right)}-e^{i\theta_{h,z_{2}}\left(t\right)}\right)\right|dt\\
 & < & \epsilon.
\end{eqnarray*}

\end{proof}
\begin{bibdiv}
\begin{biblist}
\bib{nogaalon}{article}{    author={Alon, Noga},    title={Splitting necklaces},    journal={Adv. in Math.},    volume={63},    date={1987},    number={3},    pages={247--253},    issn={0001-8708},    review={\MR{877785 (88f:05010)}},    doi={10.1016/0001-8708(87)90055-7}, label={A}}
\bib{hobbyrice}{article}{    author={Hobby, Charles R.},    author={Rice, John R.},    title={A moment problem in $L\sb{1}$ approximation},    journal={Proc. Amer. Math. Soc.},    volume={16},    date={1965},    pages={665--670},    issn={0002-9939},    review={\MR{0178292 (31 \#2550)}}, label={HR}}
\bib{lazarevlieb}{misc}{    author={Lazarev, Oleg},	author={Lieb, Elliott~H.},     title={A smooth, complex generalization of the Hobby-Rice theorem},   journal={ArXiv e-prints},    note={\tt arXiv:1205.5059v1 [math.FA]}, preprint={arXiv:1205.5059v1}	keywords={Mathematics - Functional Analysis, Mathematics - Classical Analysis and ODEs, Mathematics - Combinatorics},    date={2012-05},     label={LL} }
\bib{pinkus}{article}{    author={Pinkus, Allan},    title={A simple proof of the Hobby-Rice theorem},    journal={Proc. Amer. Math. Soc.},    volume={60},    date={1976},    pages={82--84 (1977)},    issn={0002-9939},    review={\MR{0425470 (54 \#13425)}}, label={P}}
\end{biblist}
\end{bibdiv}
\end{document}